\title{A remark on Kov\'acs' vanishing theorem}
\author{Osamu Fujino} 
\date{2012/2/20, version 1.09}
\subjclass[2010]{Primary 14F17; Secondary 14E30}
\address{Department of Mathematics, Faculty of Science, 
Kyoto University, Kyoto 606-8502, Japan}
\email{fujino@math.kyoto-u.ac.jp}
\newcommand{\Exc}[0]{{\operatorname{Exc}}}
\newcommand{\Supp}[0]{{\operatorname{Supp}}}
\newtheorem{thm}{Theorem}
\theoremstyle{definition}
\newtheorem{rem}[thm]{Remark}
\newtheorem*{ack}{Acknowledgments} 
\begin{document}

\maketitle 

\begin{abstract}
We give an alternative proof of Kov\'acs' vanishing theorem. 
Our proof is based on the standard arguments of the 
minimal model theory. We do not need the notion of Du Bois pairs. 
We reduce Kov\'acs' vanishing theorem to the well-known relative 
Kawamata--Viehweg--Nadel vanishing theorem.  
\end{abstract}

The following theorem is the main theorem of 
this paper, which we call Kov\'acs' vanishing theorem. 

\begin{thm}[{cf.~\cite[Theorem 1.2]{kovacs}}]\label{main}
Let $(X, \Delta)$ be a log canonical 
pair and let $f:Y\to X$ be a proper birational morphism 
from a smooth variety $Y$ such that 
$\Exc (f)\cup \Supp f_*^{-1}\Delta$ is a simple 
normal crossing divisor on $Y$. 
In this situation, we can write 
$$
K_Y=f^*(K_X+\Delta)+\sum _i a_i E_i. 
$$ 
We put $E=\sum _{a_i=-1}E_i$. 
Then we have  
$$
R^if_*\mathcal O_Y(-E)=0
$$ 
for every $i>0$. 
\end{thm}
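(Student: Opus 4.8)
Since the assertion is local on $X$ and $K_X+\Delta$ is $\mathbb{Q}$-Cartier, after shrinking $X$ we may assume that $K_X+\Delta\sim_{\mathbb{Q}}0$, so that $f^*(K_X+\Delta)\sim_{\mathbb{Q}}0$ on $Y$; in particular $f^*(K_X+\Delta)$ is numerically trivial over $X$. The next step is to isolate the positive part of the discrepancies. As $(X,\Delta)$ is log canonical we have $a_i\ge -1$ for every $i$, and every $E_i$ which is \emph{not} $f$-exceptional satisfies $a_i\le 0$; hence $A:=\sum_{a_i>0}a_iE_i$ is an effective $f$-exceptional $\mathbb{Q}$-divisor, while $B:=\sum_{-1<a_i<0}(-a_i)E_i$ has all coefficients in the open interval $(0,1)$. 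Reading off the coefficients in the given equation $K_Y=f^*(K_X+\Delta)+\sum_i a_iE_i$ one obtains the identity
\[
-E+\lceil A\rceil \;=\; K_Y+\Delta_Y-f^*(K_X+\Delta),\qquad \Delta_Y:=(\lceil A\rceil-A)+B,
\]
in which $\Delta_Y$ is effective with $\lfloor\Delta_Y\rfloor=0$ and $\Supp\Delta_Y$ a simple normal crossing divisor (it is contained in $\sum_iE_i$), while $\lceil A\rceil$ is an effective integral $f$-exceptional divisor whose support is disjoint from $\Supp E$.

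With this in hand I would apply the relative Kawamata--Viehweg--Nadel vanishing theorem to the Cartier divisor $L:=-E+\lceil A\rceil$ and the boundary $\Delta_Y$ on the smooth variety $Y$: by the displayed identity, $L-(K_Y+\Delta_Y)=-f^*(K_X+\Delta)\sim_{\mathbb{Q}}0$ is $f$-semiample, so the vanishing theorem yields
\[
R^if_*\mathcal O_Y\!\left(-E+\lceil A\rceil\right)=0\qquad\text{for every }i>0 .
\]
Equivalently, one regards $-E+\lceil A\rceil$ as $K_Y$ plus the round-up of a $\mathbb{Q}$-divisor which is $f$-nef with simple normal crossing fractional part, and invokes relative Kawamata--Viehweg vanishing for the birational morphism $f$ (for which the relative bigness hypothesis is automatic).

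It then remains to remove the auxiliary term $\lceil A\rceil$, and this is the only delicate point. Since $\lceil A\rceil\ge 0$ is integral and $f$-exceptional, the inclusion $\mathcal O_Y(-E)\hookrightarrow\mathcal O_Y(-E+\lceil A\rceil)$ becomes an isomorphism after applying $f_*$; for the higher direct images I would use the short exact sequence
\[
0\to\mathcal O_Y(-E)\to\mathcal O_Y(-E+\lceil A\rceil)\to\mathcal Q\to 0,
\]
where $\mathcal Q$ is supported on the exceptional divisor $\lceil A\rceil$, and prove that $R^jf_*\mathcal Q=0$ for all $j\ge 0$. One filters $\lceil A\rceil$ by its smooth prime components $E_i$ (those with $a_i>0$), so that $\mathcal Q$ inherits a filtration whose graded pieces are line bundles on these $E_i$; restricting the identity of the first paragraph to each $E_i$ by adjunction places each graded piece again within the reach of the relative vanishing theorem, now for $f|_{E_i}$, with the vanishing of the zeroth direct image coming from negativity along the fibres. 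Descending induction on the number of components of $\lceil A\rceil$ (or induction on $\dim X$) then gives $R^jf_*\mathcal Q=0$, and feeding this into the long exact sequence yields $R^if_*\mathcal O_Y(-E)\cong R^if_*\mathcal O_Y(-E+\lceil A\rceil)=0$ for $i>0$, as desired. I expect this final step — the bookkeeping of coefficients under restriction to the components of $\lceil A\rceil$ and the verification of the positivity needed there — to be the main obstacle; everything preceding it is formal once the identity in the first paragraph has been written down.
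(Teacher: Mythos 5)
Your first two steps are fine: the decomposition $-E+\lceil A\rceil=K_Y+\Delta_Y-f^*(K_X+\Delta)$ with $(Y,\Delta_Y)$ klt and SNC is correct, relative Kawamata--Viehweg vanishing (bigness being automatic for the birational $f$) does give $R^if_*\mathcal O_Y(-E+\lceil A\rceil)=0$ for $i>0$, and $f_*\mathcal O_Y(-E)=f_*\mathcal O_Y(-E+\lceil A\rceil)$ holds since $E$ and $\lceil A\rceil$ share no component. But the step you yourself flag as delicate is a genuine gap, and in fact it carries the entire content of the theorem: given what precedes it, the long exact sequence shows $R^{j}f_*\mathcal Q\simeq R^{j+1}f_*\mathcal O_Y(-E)$ for all $j\ge 0$, so ``$R^jf_*\mathcal Q=0$ for all $j$'' is not a removable auxiliary term but a restatement of Theorem \ref{main}. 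The proposed filtration of $\lceil A\rceil$ by prime components does not prove it. The graded pieces are line bundles $\mathcal O_{E_i}((-E+N)|_{E_i})$ for partial sums $N$, and (i) $f|_{E_i}$ is no longer birational, so relative bigness is no longer free and must be checked; (ii) rewriting such a piece via adjunction produces a ``boundary'' containing $-E_i|_{E_i}$, $-(\lceil A\rceil-N)|_{E_i}$ and $-E|_{E_i}$ (note $\Supp E$ and $\Supp\lceil A\rceil$ do meet in general --- they merely share no component, so your word ``disjoint'' is already misleading at exactly the point where it matters), and these terms are neither effective nor relatively nef, so relative Kawamata--Viehweg does not apply to the pieces; (iii) the cohomology of the pieces depends on the chosen ordering. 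Already for $X=\mathbb A^2$, $\Delta=0$, and $f$ the composite of a point blow-up followed by blowing up a point on the exceptional curve (so $\lceil A\rceil=C_1+2C_2$), the ordering $C_2,C_2,C_1$ yields the pieces $\mathcal O_{C_2}(C_2)$, $\mathcal O_{C_2}(2C_2)$, $\mathcal O_{C_1}(C_1+2C_2)$ of degrees $-1,-2,0$ on $\mathbb P^1$, with nonzero $H^1$ resp.\ $H^0$; the required vanishings hold only after the cancellations that the term-by-term argument forgoes, and producing a good ordering in general, with lc centres present, is not bookkeeping.

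This is precisely why the coefficient-$(-1)$ part $E$ needs input beyond Kawamata--Viehweg: Kov\'acs' original proof uses Du Bois pairs, and the proof in this paper takes a different route from yours --- it passes to a dlt blow-up $g:Z\to X$, uses Grothendieck duality and Cohen--Macaulayness of $\llcorner\Delta_Z\lrcorner$ to prove $Rh_*\mathcal O_E\simeq\mathcal O_{\llcorner\Delta_Z\lrcorner}$ and hence $h_*\mathcal O_Y(-E)=\mathcal O_Z(-\llcorner\Delta_Z\lrcorner)$ with vanishing higher direct images, applies Kawamata--Viehweg--Nadel on $Z$, and finally invokes the independence of $R^if_*\mathcal O_Y(-E)$ of the chosen resolution (weak factorization, Remark \ref{rem2}) to reach the stated generality. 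To salvage your approach you would need to replace the filtration step by an argument of comparable strength (duality on $E$, or a reduction to a dlt model as above); as written, the final step assumes what is to be proved.
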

In this short paper, we reduce Kov\'acs' vanishing theorem to the 
well-known relative Kawamata--Viehweg--Nadel vanishing theorem 
by taking a dlt blow-up. Our proof makes Kov\'acs' vanishing theorem 
more accessible. 
From our viewpoint, Theorem \ref{main} is a variant of the relative 
Kawamata--Viehweg--Nadel vanishing theorem. 

Throughout this paper, we will work over an algebraically closed filed $k$ of characteristic 
zero and 
freely use the standard notation of the minimal model theory.  

\begin{rem}In \cite{kovacs}, Kov\'acs proved a rather general vanishing theorem 
for Du Bois pairs 
(cf.~\cite[Theorem 6.1]{kovacs}) and use it to derive Theorem \ref{main}. 
For the details, see \cite{kovacs}. 
\end{rem}

Before we give a proof of Theorem \ref{main}, we make a small remark. 

\begin{rem}
In \cite[Theorem 1.2]{kovacs}, $X$ is assumed to be 
$\mathbb Q$-factorial. 
Therefore, the statement of Theorem \ref{main} is slightly 
better than the original one (cf.~\cite[Theorem 1.2]{kovacs}). 
However, we can check that Theorem \ref{main} follows from \cite[Theorem 1.2]{kovacs}. 
\end{rem}

The following remark is important and seems to be well known to the experts. 

\begin{rem}\label{rem2}
The sheaf $R^if_*\mathcal O_Y(-E)$ is independent of the choice of  
$f:Y\to X$ for every $i$. 
It can be checked easily by the standard arguments based on the 
weak factorization theorem (cf.~\cite[Lemma 6.5.1]{kovacs}). 
For related topics, see \cite[Lemma 4.2]{fujino-lc}. 
\end{rem}

Let us start the proof of Theorem \ref{main}. It is essentially the 
same as the proof of \cite[Theorem 4.14]{book} 
(see also \cite[Proposition 2.4]{fujino-lc}). 

\begin{proof}[Proof of {\em{Theorem \ref{main}}}] 
By shrinking $X$, we may assume that 
$X$ is quasi-projective. 
We take a dlt blow-up $g:Z\to X$ (see, for example, \cite[Section 4]{ssmmp}). 
This means that $g$ is a projective birational morphism, 
$K_Z+\Delta_Z=g^*(K_X+\Delta)$, and 
$(Z, \Delta_Z)$ is a $\mathbb Q$-factorial 
dlt pair. 
By using Szab\'o's resolution lemma, we 
take a resolution of singularities $h:Y\to Z$ with 
the following properties. 
\begin{itemize}
\item[(1)] $\Exc (h)\cup \Supp h_*^{-1}\Delta_Z$ is a simple normal crossing 
divisor on $Y$. 
\item[(2)] $h$ is an isomorphism 
over the generic point of 
any lc center of $(Z, \Delta_Z)$. 
\end{itemize} 
We can write 
$$K_Y+h_*^{-1}\Delta_Z=h^*(K_Z+\Delta_Z)+F.$$ 
We put $f=g\circ h: Y\to X$. 
In this situation, $E=\llcorner h_*^{-1}\Delta_Z\lrcorner$. 
Note that 
$\ulcorner F\urcorner$ is effective and $h$-exceptional 
by the construction. We also note that 
$\Exc (f)\cup \Supp f_*^{-1}\Delta$ is not necessarily 
a simple normal crossing divisor on $Y$ in the above construction. 
We consider the following 
short exact sequence 
$$0\to \mathcal O_Y(-E+\ulcorner F\urcorner)\to 
\mathcal O_Y(\ulcorner F\urcorner)\to \mathcal O_{E}(\ulcorner 
F|_{E}\urcorner)\to 0. $$ 
Since 
$-E+F\sim _{\mathbb R, h}K_Y+\{h_*^{-1}\Delta_Z\}$ and 
$F\sim _{\mathbb R, h}K_Y+h_*^{-1}\Delta_Z$, 
we have $$-E+\ulcorner F\urcorner \sim _{\mathbb R, h}K_Y
+\{h_*^{-1}\Delta_Z\}+\{-F\}$$ and 
$$\ulcorner F\urcorner \sim _{\mathbb R, h} K_Y+h_*^{-1}\Delta_Z+\{-F\}. $$ 
By the relative Kawamata--Viehweg vanishing theorem and 
the vanishing theorem of Reid--Fukuda type (see, for example, 
\cite[Lemma 4.10]{book}), we have  
$$R^ih_*\mathcal O_Y(-E+\ulcorner F\urcorner)=R^ih_*\mathcal O_Y(\ulcorner 
F\urcorner)=0$$ for every $i>0$. 
Therefore, we have a short exact sequence $$0\to 
h_*\mathcal O_Y(-E+\ulcorner F\urcorner)\to 
\mathcal O_Z\to h_*\mathcal O_{E}(\ulcorner 
F|_{E}\urcorner)\to 0$$ and 
$R^ih_*\mathcal O_{E}(\ulcorner F|_{E}\urcorner)=0$ 
for every $i>0$. Note that $\ulcorner F\urcorner $ is effective 
and $h$-exceptional. 
Thus we obtain 
$$\mathcal O_{\llcorner \Delta_Z\lrcorner }\simeq 
h_*\mathcal O_{E}\simeq h_*\mathcal O_{E} 
(\ulcorner F|_{E}\urcorner). $$ 
By the above vanishing result, 
we obtain $Rh_*\mathcal O_{E}(\ulcorner 
F|_{E}\urcorner)\simeq 
\mathcal O_{\llcorner \Delta_Z\lrcorner}$ in the derived category 
of coherent sheaves on $\llcorner \Delta_Z\lrcorner$. 
Therefore, the composition 
$$\mathcal O_{\llcorner \Delta_Z\lrcorner}\overset{\alpha}\longrightarrow 
R h_*\mathcal O_{E}\overset{\beta}\longrightarrow 
Rh_*\mathcal O_{E}(\ulcorner F|_{E}\urcorner)\simeq 
\mathcal O_{\llcorner \Delta_Z\lrcorner}$$ is 
a quasi-isomorphism. 
Apply $R\mathcal Hom_{\llcorner \Delta_Z\lrcorner} 
(\underline{\ \ \ } ,\, \omega^{\bullet}_{\llcorner \Delta_Z\lrcorner})$ 
to 
$$\mathcal O_{\llcorner \Delta_Z\lrcorner}\overset {\alpha}\longrightarrow 
Rh_*\mathcal O_{E}\overset{\beta}\longrightarrow \mathcal O_{\llcorner 
\Delta_Z\lrcorner},$$ where 
$\omega_{\llcorner \Delta_Z\lrcorner}^{\bullet}$ is the dualizing complex 
of $\llcorner \Delta_Z\lrcorner$. 
Then we obtain that 
$$\omega^{\bullet}_{\llcorner \Delta_Z\lrcorner }\overset{a}\longrightarrow 
R h_*\omega^{\bullet}_{E} 
\overset{b}\longrightarrow 
\omega^{\bullet}_{\llcorner \Delta_Z\lrcorner}$$ 
and that $b\circ a$ is a quasi-isomorphism 
by the Grothendieck duality, where 
$\omega_E^{\bullet}\simeq \omega_E[\dim E]$ is the dualizing complex of $E$. 
Hence, we have  
$$h^i(\omega^{\bullet}_{\llcorner \Delta_Z\lrcorner})\subseteq R^ih_*\omega^{\bullet}_{E} 
\simeq R^{i+d}h_*\omega_{E},$$ where 
$d=\dim E=\dim \llcorner \Delta_Z\lrcorner =\dim X-1$. 
By the vanishing theorem (see, for 
example, \cite[Lemma 2.33]{book} and \cite[Lemma 3.2]{vanishing}), 
$R^ih_*\omega_{E}=0$ for 
every $i>0$. 
Therefore, $h^i(\omega^{\bullet}_{\llcorner \Delta_Z\lrcorner})=0$ for every $i>-d$. 
Thus, $\llcorner \Delta_Z\lrcorner$ is Cohen--Macaulay. 
This implies $\omega_{\llcorner \Delta_Z\lrcorner}^{\bullet}\simeq \omega_{\llcorner \Delta_Z\lrcorner}[d]$. 
Since $E$ is a simple normal crossing divisor on $Y$ and $\omega_{E}$ is an invertible 
sheaf on $E$, every associated prime of $\omega_{E}$ is the generic point of some irreducible 
component of $E$. By $h$, 
every irreducible component of $E$ is mapped birationally onto 
an irreducible component of $\llcorner \Delta_Z\lrcorner$. 
Therefore, $h_*\omega_{E}$ is a pure sheaf on $\llcorner \Delta_Z\lrcorner$. 
Since the composition 
$$\omega_{\llcorner \Delta_Z\lrcorner}
\to h_*\omega_{E}\to \omega_{\llcorner \Delta_Z\lrcorner}$$ 
is an isomorphism, which is induced by $a$ and $b$ above, we obtain 
$h_*\omega_{E}\simeq \omega_{\llcorner \Delta_Z\lrcorner}$. 
It is because $h_*\omega_{E}$ is generically isomorphic to $\omega_{\llcorner \Delta_Z\lrcorner}$. 
By the Grothendieck duality, 
\begin{align*}
Rh_*\mathcal O_{E}&\simeq 
R\mathcal Hom _{\llcorner \Delta_Z\lrcorner}(Rh_*\omega^{\bullet}_{E}, 
\, \omega^{\bullet}_{\llcorner \Delta_Z\lrcorner})\\ 
&\simeq 
R\mathcal Hom _{\llcorner \Delta_Z\lrcorner}(\omega^{\bullet}_{\llcorner 
\Delta_Z\lrcorner},\,  
\omega^{\bullet}_{\llcorner \Delta_Z\lrcorner})\simeq \mathcal O_{\llcorner \Delta_Z\lrcorner} 
\end{align*} 
in the derived category of coherent sheaves on $\llcorner \Delta_Z\lrcorner$. 
In particular, $R^ih_*\mathcal O_{E}=0$ for every $i>0$. 
Since 
$Z$ has only rational singularities, 
we have $R^ih_*\mathcal O_Y=0$ for every $i>0$ and 
$h_*\mathcal O_Y\simeq \mathcal O_Z$. 
Thus, we can easily check that 
$R^ih_*\mathcal O_Y(-E)=0$ for every $i>0$ 
by using the exact sequence 
$$
0\to \mathcal O_Y(-E)\to \mathcal O_Y\to \mathcal O_E\to 0. 
$$
Note that $h_*\mathcal O_E\simeq \mathcal O_{\llcorner \Delta_Z\lrcorner}$. 
We can also check that $h_*\mathcal O_Y(-E)=\mathcal J(Z, \Delta_Z)$, where 
$\mathcal J(Z, \Delta_Z)$ is the 
multiplier ideal sheaf associated to the pair $(Z, \Delta_Z)$. 
Note that $\mathcal J(Z, \Delta_Z)=\mathcal O_Z(-\llcorner \Delta_Z\lrcorner)$ in 
our situation. 
Therefore, 
$$R^if_*\mathcal O_Y(-E)\simeq R^ig_*\mathcal J(Z, \Delta_Z)$$ for 
every $i$ by Leray's spectral sequence. 
By the relative Kawamata--Viehweg--Nadel vanishing theorem, 
$R^ig_*\mathcal J(Z, \Delta_Z)=0$ for every $i>0$. 
Thus we obtain $R^if_*\mathcal O_Y(-E)=0$ for every $i>0$. 
Note that $\Exc (f)\cup \Supp f_*^{-1}\Delta$ is not necessarily a simple normal crossing 
divisor on $Y$ in the above construction. 
Let $\mathcal I_{\Exc (f)}$ be the defining ideal sheaf of 
$\Exc (f)$ on $Y$. 
Apply the principalization of $\mathcal I_{\Exc(f)}$. 
Then we obtain a sequence of blow-ups whose centers have simple 
normal crossings with $\Exc (h)\cup \Supp h_*^{-1}\Delta_Z$ 
(see, for example, \cite[Theorem 3.35]{kollar}). 
In this process, 
$R^if_*\mathcal O_Y(-E)$ does not change for every $i$ 
as in Remark \ref{rem2} (see also \cite[4.6]{fujino-lc}). 
Therefore, we may assume that 
$\Exc (f)\cup \Supp f_*^{-1}\Delta$ is a simple normal crossing 
divisor on $Y$. 
Remark \ref{rem2} completes the proof of Theorem \ref{main}. 
\end{proof}

\begin{ack}
The author was partially supported by the Grant-in-Aid for Young Scientists 
(A) $\sharp$20684001 from JSPS. 
\end{ack}


\begin{thebibliography}{Ko}

\bibitem[F1]{book} 
O.~Fujino, Introduction to the log minimal model program for 
log canonical pairs, preprint (2009). 

\bibitem[F2]{ssmmp} 
O.~Fujino, 
Semi-stable minimal model program for varieties with trivial canonical divisor, 
Proc. Japan Acad. Ser. A Math. Sci. {\textbf{87}} (2011), no. 3, 25--30

\bibitem[F3]{fujino-lc} 
O.~Fujino, 
On isolated log canonical singularities with index one, J. Math. Sci. Univ. 
Tokyo {\textbf{18}} (2011), 299--323. 

\bibitem[F4]{vanishing} 
O.~Fujino, Vanishing theorems, preprint (2011). 

\bibitem[Ko]{kollar} 
J.~Koll\'ar, 
{\em{Lectures on resolution of singularities}}, 
Annals of Mathematics Studies, {\textbf{166}}. Princeton University Press, Princeton, NJ, 2007. 

\bibitem[Kv]{kovacs} 
S.~J.~~Kov\'acs, 
Du Bois pairs and vanishing theorems, 
Kyoto J. Math. {\textbf{51}} (2011), no. 1, 47--69. 

\end{thebibliography}
\end{document}